\pdfoutput=1
\documentclass[11pt]{amsart}

\usepackage[paper,thm/numberwithin={},pkg/amsaddr=true]{zbs}
\setlist[enumerate,1]{ref=(\arabic*)}

\addbibresource{kthpower.bib}
\title[Correction to Chinburg-Henriksen result on powers of integer polynomials]{A correction to a result of Chinburg and Henriksen on powers of integer polynomials}
\author{Daniel G. Zhu}
\address{Department of Mathematics, Princeton University, Princeton, NJ 08544, USA}
\email{zhd@princeton.edu}

\begin{document}
\begin{abstract}
For a positive integer $k$, let $m(k)$ be the minimum positive integer $m$ such that $mx$ can be written as an integer linear combination of $k$th powers of integer polynomials. We correct an error in a 1976 formula of Chinburg and Henriksen for $m(k)$.
\end{abstract}
\maketitle

Given a positive integer $k$ and a ring $R$, let $J(k, R)$ be the additive subgroup of $R$ generated by $k$th powers of elements of $R$ and let $G(k, R) \subseteq \setz$ consist of the integers $m$ for which $mR \subseteq J(k, R)$. Since $G(k, R)$ is an additive subgroup of $\setz$, it is $m(k, R)\setz$ for some unique $m(k, R)\geq 0$. Furthermore, let $m(k) = m(k, \setz[x])$, a definition motivated by the fact that for any homomorphism $\varphi\colon\setz[x]\to R$ and $m \in G(k, R)$, we have $m\varphi(\setz[x]) \subseteq \varphi(J(k, \setz[x])) \subseteq J(k, R)$. By varying $\varphi$, we find that $mR \subseteq J(k, R)$ for all $m \in G(k, \setz[x])$, which implies that $G(k, \setz[x]) \subseteq G(k, R)$ and thus $m(k, R) \mid m(k)$. A similar argument also shows that $m(k)$ can be defined as the minimum positive integer $m$ such that $mx \in J(k, \setz[x])$ (or $0$ if no such integer exists, though we will rule out this possibility below).

In 1976, Chinburg and Henriksen \cite{ch76} published an explicit formula for $m(k)$. To state their result, we first note two basic properties of $m(k)$:
\begin{itemize}
\item Since the $x$-coefficient of any $k$th power in $\setz[x]$ is always divisible by $k$, we must have $k \mid m(k)$.
\item By the theory of finite differences, the polynomial
\[\sum_{i=0}^{k-1} (-1)^{i} \binom{k-1}{i} (x - i)^k\]
is of the form $k!x + a$ for some $a \in\setz$, implying that $k!x \in J(k, \setz[x])$ and thus that $m(k) \mid k!$. (We may ignore the constant term as $1 \in J(k, \setz[x])$.)
\end{itemize}
As a result, we may decompose
\[m(k) = k\cdot \prod_{p\mid k} p^{\alpha_k(p)} \cdot \prod_{\substack{p < k \\ p \mkern1.5mu\nmid\mkern1.5mu k}} p^{\beta_k(p)}\]
for some nonnegative integers $\alpha_k(p)$ and $\beta_k(p)$, where the product is taken over primes $p$. Chinburg and Henriksen incorrectly claimed the following:
\begin{enumerate}
\item\label{item:p1} For $p$ odd and $k$ a multiple of $p$, we have $\alpha_k(p) = 1$ if $k > p$ and $\alpha_k(p) = 0$ if $k = p$.
\item\label{item:p2} For $k$ even, we have $\alpha_k(2) = 0$ if $k = 2$, $\alpha_k(2) = 2$ if $k$ is divisible for $2^j - 1$ for some positive integer $j \geq 2$, and $\alpha_k(2) = 1$ for all other $k$.
\item\label{item:p3} For $k$ not a multiple of $p$, we have $\beta_k(p) = 1$ if $k$ is divisible by a positive integer of the form $(p^{mr}-1)/(p^r-1)$ for positive integers $m \geq 2$ and $r$, and $\beta_k(p) = 0$ otherwise.
\end{enumerate}
While items \ref{item:p1} and \ref{item:p3} are correct, item \ref{item:p2} is not. The corresponding error in \cite{ch76} appears at the end of the proof of Proposition 13, where an identity $2^{n+1} x = \sum_{i=1}^q a_ig_i(x)^k$ modulo an ideal $I \subseteq \setz[x]$ is specialized to $0 = \sum_{i=1}^q a_ig_i(0)^k$ and $2^{n+1} = \sum_{i=1}^q a_ig_i(1)^k$ modulo the same ideal $I$: an invalid manipulation, as plugging in $x = 0$ and $x = 1$ to an element of $I$ does not necessarily yield an element of $I$. The purpose of this note is to prove the following corrected version of item \ref{item:p2}:
\begin{thm} \label{thm:main}
We have $\alpha_2(2) = 0$. For $k > 2$ even, we have $\alpha_k(2) = 2$ if $k$ is divisible by $6$ and $\alpha_k(2) = 1$ otherwise.
\end{thm}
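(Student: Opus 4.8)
The plan is to pass to the prime $2$. Since localization is exact and odd integers are units in $\setz_{(2)}$, one has $J(k,\setz[x])\otimes\setz_{(2)} = J(k,\setz_{(2)}[x])$, so that $\alpha_k(2) = v_2(m(k)) - v_2(k)$, where $v_2(m(k))$ is the least $a$ with $2^a x \in J(k,\setz_{(2)}[x])$. Writing $c = v_2(k)$ and $u = k/2^c$, and disposing of $k=2$ by $2x = (x+1)^2 - x^2 - 1$, it remains to show for even $k>2$ that this least $a$ is $c+2$ when $6\mid k$ and $c+1$ otherwise. I would prove this by matching lower and upper bounds on $a$.

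For lower bounds I would use additive functionals: any $\setz$-linear $\Lambda\colon\setz[x]\to A$ annihilating every $k$th power annihilates $J(k,\setz[x])$, so $\Lambda(m(k)\,x)=0$ and hence $\operatorname{ord}\Lambda(x)\mid m(k)$. Take $A=\setz/2^{c+1}$ and $\Lambda(g)=[x^1]g+2[x^2]g\bmod 2^{c+1}$, where $[x^j]$ denotes coefficient extraction. For $f=a_0+a_1x+\cdots$ a direct expansion gives
\[[x^1]f^k+2[x^2]f^k = k\,a_0^{\,k-2}\bigl(a_0a_1+2a_0a_2+(k-1)a_1^2\bigr),\]
and since $k$ is even with $k\geq 4$, the bracket is $\equiv a_1(a_0+a_1)\pmod 2$ while $a_0^{\,k-2}a_1(a_0+a_1)$ is always even; thus $2^{c+1}\mid[x^1]f^k+2[x^2]f^k$ and $\Lambda$ kills all $k$th powers. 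As $\Lambda(x)=1$ has order $2^{c+1}$, this yields $\alpha_k(2)\geq 1$ for all even $k>2$.

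To obtain $\alpha_k(2)\geq 2$ when $6\mid k$ I would reduce to the Galois ring $W=\setz/2^{c+2}[\omega]$, $\omega^2+\omega+1=0$: the map $\setz[x]\to W$, $x\mapsto\omega$, carries $J(k,\setz[x])$ into $J(k,W)$, so $\operatorname{ord}_{W/J(k,W)}(\omega)\mid m(k)$. Here $W^\times=\langle\omega\rangle\times(1+2W)$; as $3\mid k$ the cube-root factor is killed, and from $(1+2s)^{2^c}\equiv 1+2^{c+1}(s^2+s)\pmod{2^{c+2}}$ together with $\{s^2+s:s\in\mathbf F_4\}=\mathbf F_2$ one gets $(1+2W)^{2^c}=\{1,\,1+2^{c+1}\}$; since the non-unit $k$th powers vanish ($k\geq c+2$), $J(k,W)$ is exactly the scalar copy $\setz/2^{c+2}$, whence $\omega$ has order $2^{c+2}$. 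The analogous computation over $\setz/2^{c+2}[\mathbf F_{2^r}]$ is also what explains the error in \cite{ch76}: an order-$2^{c+2}$ obstruction requires the $k$th-power roots of unity to span, together with $\ker(\operatorname{Tr}_{\mathbf F_{2^r}/\mathbf F_2})$, a proper subspace, which forces $r$ even (so that $\operatorname{Tr}(1)=0$) and the power map to collapse $\mathbf F_{2^r}^\times$; as $r$ even gives $3\mid 2^r-1$, every such case has $3\mid k$, whereas the odd-$r$ conditions $2^r-1\mid k$ (e.g.\ $7\mid k$) of \cite{ch76} fail because $\operatorname{Tr}_{\mathbf F_{2^r}/\mathbf F_2}(1)=r\equiv 1$.

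For the upper bounds I would use the antisymmetrized powers
\[(a+x)^k-(a-x)^k = 2\sum_{j\ \mathrm{odd}}\binom{k}{j}a^{\,k-j}x^j,\]
whose $x$-coefficient $2k\,a^{k-1}$ is already divisible by $2^{c+1}$ and which contain only odd-degree terms. Seeking $\sum_a c_a\bigl((a+x)^k-(a-x)^k\bigr)=2^{c+1}x$ with $c_a\in\setz_{(2)}$ reduces, sufficiently, to the generalized Vandermonde system $\sum_a c_a a^{\,i}=\delta_{i,k-1}\,u^{-1}$ over odd $i\in\{1,3,\dots,k-1\}$ in finitely many unknowns $c_a$; solving it (say with $a=1,\dots,k/2$) gives $2^{c+1}x\in J(k,\setz_{(2)}[x])$ and hence $\alpha_k(2)\leq 1$ whenever the determinant of the odd-moment matrix is a $2$-adic unit, while running the same method with target $2^{c+2}x$ gives $\alpha_k(2)\leq 2$ in general. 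The main obstacle is precisely this last step: pinning down $v_2$ of that determinant and showing it is a unit exactly when $3\nmid k$, so that the upper bounds meet the lower bounds above. I would attack it by row-reducing in the binomial basis and tracking $v_2$ of the $\binom{k}{j}$ via Kummer/Lucas-type congruences, expecting the $3\mid k$ dichotomy to surface there as the single extra factor of $2$.
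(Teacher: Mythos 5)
Your localization setup, the $k=2$ case, and both lower bounds are correct, and they are obtained by routes genuinely different from the paper's: the functional $\Lambda(g)=[x^1]g+2[x^2]g\bmod 2^{c+1}$ is a clean, self-contained replacement for the paper's appeal to \cite[Prop.~11]{ch76}, and your Galois ring $W=(\setz/2^{c+2}\setz)[\omega]$ is exactly the paper's ring $\setz[x]/(x^2+x+1)$ reduced modulo $2^{c+2}$, with your Teichm\"uller--Artin--Schreier structure computation replacing the paper's induction on $s$ in $k=6\cdot 2^s$. Had your upper bound worked, this would have been a legitimately different and in places slicker proof.

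It does not work, and the failure is not the deferred determinant estimate you flag as ``the main obstacle'': the system you reduce to is inconsistent over $\setz_{(2)}$ for every even $k\geq 4$ and every choice of nodes $a$. Since $\setz_{(2)}[x]$ is a domain and $\binom{k}{j}\neq 0$, your moment conditions are forced, not merely sufficient; in particular $j=1$ forces $\sum_a c_a a^{k-1}=u^{-1}$ and $j=k-1$ (odd, as $k$ is even) forces $\sum_a c_a a=0$. Subtracting,
\[\sum_a c_a\bigl(a^{k-1}-a\bigr)=u^{-1},\]
a $2$-adic unit; but $a^{k-1}-a\in 2\setz_{(2)}$ for every $a\in\setz_{(2)}$ (both terms have the same parity since $k-1\geq 1$), so the left side lies in $2\setz_{(2)}$, a contradiction. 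The clash persists if you relax to a congruence mod $2^{c+2}$: the $j=1$ condition forces $\sum_a c_a a^{k-1}$ odd while $j=k-1$ forces $\sum_a c_a a$ even, contradicting $a^{k-1}\equiv a\pmod 2$. Consequently the determinant you hoped to show is a unit exactly when $3\nmid k$ is in fact \emph{never} a unit (a unit determinant would yield a solution), no Kummer--Lucas analysis can change this, and your scheme has no mechanism whatsoever for the hypothesis $3\nmid k$ to enter: it fails already at $k=4$, where $\alpha_4(2)=1$ is true but is witnessed by non-antisymmetrized combinations instead (finite differences give $4!\,x$ plus a constant). This is precisely why the paper's upper-bound proof --- the half of the theorem on which Chinburg and Henriksen's own argument foundered --- uses $k$th powers of polynomials of arbitrary degree: modulo $2^{n+2}$ one has $g^k(1+2h)^k-g^k\equiv 2^{n+1}g^k(h+h^2)$, reducing the problem to showing that the elements $g^k$ and $g^k(h+h^2)$ additively span $\setf_2[x]$, and it is there that $3\nmid k$ finally intervenes (the cube roots of unity in $\setf_{2^j}$ are $k$th powers, \cref{prop:ff}), together with a nonzero ideal inside $J(k,\setf_2[x])$ and a Frobenius argument. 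Any repair of your approach would have to import polynomial, non-linear $k$th powers in some comparable way.
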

The first case where this differs from the Chinberg-Henricksen formula is $k = 14$. Instead of $14 \cdot 2^2 \cdot 7 \cdot 13 = 5096$, we have $m(14) = 14 \cdot 2 \cdot 7 \cdot 13 = 2548$. More numerical data can be found in the \hyperref[sec:appendix]{appendix}, as well as sequences \href{https://oeis.org/A370252}{A370252}, \href{https://oeis.org/A005729}{A005729}, \href{https://oeis.org/A005730}{A005730}, and \href{https://oeis.org/A005731}{A005731} in \cite{oeis}.

\subsection*{Preliminaries} Let $v_p$ denote $p$-adic valuation and let $n = v_2(k)$. To begin, we recall some results from \cite{ch76}.

\begin{lem}[Special case of {\cite[Prop.\ 4(b)]{ch76}}] \label{lem:ak2}
We have $\alpha_2(2) = 0$. If $k > 2$ is even, then $v_2(m(k)) = v_2(m(k, \setz/2^{n+2}\setz[x]))$. In particular, $\alpha_k(2) \leq 2$.
\end{lem}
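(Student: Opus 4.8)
The plan is to treat the two assertions separately. For $\alpha_2(2)=0$ I would simply combine the two divisibilities from the introduction: $2 = k \mid m(2)$ and $m(2)\mid 2! = 2$ force $m(2)=2=2^{1}$, so $\alpha_2(2)=v_2(m(2))-v_2(2)=0$. Everything else concerns $k>2$ even, and I will write $J=J(k,\setz[x])$, $R=(\setz/2^{n+2}\setz)[x]$, and $b^{\ast}=v_2(m(k,R))$. Two structural facts are worth recording first: $J$ is a subring of $\setz[x]$ (a product of sums of $k$th powers is again a sum of $k$th powers) containing $1$, hence containing $\setz$ and closed under multiplication by integers; and since $m(k)\mid k!$ we have $k!\in m(k)\setz=G(k,\setz[x])$, so $k!\setz[x]\subseteq J$. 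The easy half of the equality is then immediate from the introduction: $m(k,R)\mid m(k)$, and because $2^{n+2}R=0$ we have $m(k,R)\mid 2^{n+2}$, so $m(k,R)=2^{b^{\ast}}$ with $b^{\ast}\le n+2$ and $v_2(m(k,R))=b^{\ast}\le v_2(m(k))$. Granting the reverse inequality, the equality follows, and since $v_2(m(k))=n+\alpha_k(2)$ we also get $\alpha_k(2)\le 2$.

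The crux is the reverse inequality $v_2(m(k))\le b^{\ast}$, which I would obtain by a lifting argument driven by the single identity that, for every $j\ge1$ and every $N\ge n+2$,
\[(1+2^{N-n}x^j)^k \equiv 1 + 2^{N}\,w\,x^j \pmod{2^{N+1}},\]
where $w=k/2^n$ is odd. Indeed the coefficient of $x^{jl}$ in the expansion is $\binom{k}{l}2^{(N-n)l}$, and using $v_2\!\left(\binom{k}{l}\right)\ge n-v_2(l)$ (from $\binom{k}{l}=\tfrac{k}{l}\binom{k-1}{l-1}$) one checks that for $l\ge2$ its valuation is at least $(n-v_2(l))+(N-n)l\ge N+1$, so only the $l=0,1$ terms survive modulo $2^{N+1}$. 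Multiplying by an integer inverse of $w$ modulo $2^{N+1}$ (legitimate since $J$ absorbs integer multiples) turns this into $2^{N}x^j\in J+2^{N+1}\setz[x]$; the case $j=0$ is trivial since $2^{N}\in\setz\subseteq J$. Starting from $2^{b^{\ast}}\bar x\in J(k,R)$, i.e.\ $2^{b^{\ast}}x\in J+2^{n+2}\setz[x]$, I would then induct on $N$: writing $2^{b^{\ast}}x=P+2^Nh$ with $P\in J$, I reduce $h$ modulo $2$ and apply the displayed membership monomial-by-monomial to the support of $h\bmod 2$, upgrading the congruence to modulus $2^{N+1}$. This yields $2^{b^{\ast}}x\in J+2^N\setz[x]$ for every $N$.

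To convert this into an honest membership I would pass to $T=\setz[x]/J$. By the facts above $T$ is a $\setz/k!\setz$-module; writing $k!=2^{t}s$ with $s$ odd and $t=v_2(k!)$, the Chinese Remainder decomposition splits off an odd-torsion summand, and for $N\ge t$ one has $2^NT=e_sT$, the part annihilated by $s$. The conclusion of the induction says precisely that the image of $2^{b^{\ast}}x$ lies in $\bigcap_N 2^NT=e_sT$, hence is killed by the odd number $s$; that is, $s\,2^{b^{\ast}}x\in J$. Since $s$ is odd this gives $m(k)\mid s\,2^{b^{\ast}}$ and therefore $v_2(m(k))\le b^{\ast}$, completing the equality.

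I expect the main obstacle to be exactly this last passage from ``congruence modulo $2^N$ for all $N$'' to a genuine relation over $\setz$: the degrees of the polynomials produced by the lifting grow without bound, so one cannot simply take a limit. The device of working in $T=\setz[x]/J$ and exploiting that $T$ has bounded $2$-power torsion (a Krull-intersection-type phenomenon) is what sidesteps the degree blow-up. The other point needing care is the valuation bookkeeping in the lifting identity, which must land at the sharp threshold $N\ge n+2$ rather than something weaker; this is precisely where the modulus $2^{n+2}$ enters, reflecting that the $k$th powers of $2$-adic units fill out exactly $1+2^{n+2}\setz_2$.
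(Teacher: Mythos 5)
There is no internal proof to compare against here: the paper quotes this lemma directly from \cite[Prop.\ 4(b)]{ch76} and never proves it, so your argument must be judged as a self-contained replacement for that citation, and as such it is correct. The computation $m(2)=2$ giving $\alpha_2(2)=0$, and the easy inequality $v_2(m(k,\setz/2^{n+2}\setz[x]))\le v_2(m(k))$ together with $b^{\ast}\le n+2$, follow from the introduction exactly as you say. The substantive half also holds up: your bound $v_2\bigl(\tbinom{k}{l}\bigr)\ge n-v_2(l)$ is correct (and slightly sharper than \cref{lem:vpbinom}, which would also have sufficed), and for $l\ge 2$ and $N\ge n+2$ one indeed has $(N-n)(l-1)\ge 2(l-1)\ge v_2(l)+1$, so the expansion of $(1+2^{N-n}x^j)^k$ collapses to $1+2^{N}wx^j$ modulo $2^{N+1}$; inverting the odd integer $w$ modulo $2^{N+1}$ then yields $2^Nx^j\in J+2^{N+1}\setz[x]$, and the monomial-by-monomial induction lifting $2^{b^{\ast}}x\in J+2^{n+2}\setz[x]$ to $J+2^{N}\setz[x]$ for every $N$ is sound. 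The step most worth scrutinizing is the passage to $T=\setz[x]/J$, since $J$ is a subring of $\setz[x]$ but not an ideal, so $T$ is only an abelian group; but you use only the additive structure, and since $k!\,T=0$, the splitting of $T$ into its $2$-primary and odd-torsion parts does force $2^NT$ to stabilize at the odd part once $N\ge v_2(k!)$, whence $s\,2^{b^{\ast}}x\in J$ for odd $s$, and $m(k)\mid s\,2^{b^{\ast}}$ follows from the introduction's characterization of $m(k)$ via $mx\in J$. This bounded-torsion device genuinely resolves the degree blow-up obstacle you flag, which is the one place a naive limiting argument would fail. Whether your route coincides with Chinburg--Henriksen's original proof of their Proposition 4(b) cannot be determined from this paper alone, but as a proof of the lemma as stated it is complete, and it has the virtue of using only facts already established in the paper's introduction.
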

\begin{lem}[{\cite[Prop.\ 11]{ch76}}] \label{lem:ak1}
For a prime $p$ and positive integer $k$ with $p \mid k$ and $p < k$, we have $\alpha_k(p) \geq 1$.
\end{lem}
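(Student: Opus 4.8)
The first assertion $\alpha_2(2) = 0$ is exactly the opening statement of Lemma~\ref{lem:ak2}, so I would only need to treat even $k > 2$. For such $k$, Lemma~\ref{lem:ak1} gives $\alpha_k(2) \ge 1$ and Lemma~\ref{lem:ak2} gives $\alpha_k(2) \le 2$, so the entire content is to decide when $\alpha_k(2) = 2$. I would first reduce this to a single membership question over the finite ring $R = (\setz/2^{n+2}\setz)[x]$. Since every ring homomorphism $R \to R$ of the form $x \mapsto h(x)$ carries $J(k,R)$ into itself and $1 \in J(k,R)$, the same argument as in the introduction shows that $2^{n+1} \in G(k,R)$ if and only if $2^{n+1} x \in J(k,R)$. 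As $2^{n+2} = 0$ in $R$ we have $m(k,R) \mid 2^{n+2}$, and combining the two lemmas gives $n+1 \le v_2(m(k,R)) = v_2(m(k)) \le n+2$; hence $m(k,R) \in \{2^{n+1}, 2^{n+2}\}$ and
\[\alpha_k(2) = 1 \iff 2^{n+1} x \in J(k,R), \qquad \alpha_k(2) = 2 \iff 2^{n+1} x \notin J(k,R).\]
Thus the theorem becomes: for even $k > 2$, $2^{n+1}x \in J(k,R)$ if and only if $3 \nmid k$.

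The computational engine I would use is the congruence
\[(1 + 2h)^k \equiv 1 + 2^{n+1}\,(h + h^2) \pmod{2^{n+2}}, \qquad h \in R.\]
To prove it, expand $(1+2h)^k = \sum_{i} \binom{k}{i} 2^i h^i$ and bound the $2$-adic valuation of the $i$th coefficient by $i + v_2\binom{k}{i} \ge i + (n - v_2(i)) = n + (i - v_2(i))$; since $i - v_2(i) \ge 2$ for all $i \ge 3$, every term with $i \ge 3$ vanishes modulo $2^{n+2}$, while the $i = 1$ and $i = 2$ terms simplify to $2^{n+1} h$ and $2^{n+1} h^2$ because $k/2^n$ and $k - 1$ are odd. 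In particular $2^{n+1}(h + h^2) \in J(k,R)$ for every $h$; reducing coefficients modulo $2$ and using the Frobenius identity $h + h^2 \equiv h(x) + h(x^2)$, these elements generate the subgroup $2^{n+1}\cdot\operatorname{span}_{\setz/2\setz}\{\,x^j + x^{2j} : j \ge 1\,\}$ of $R$. A short argument on the top-degree term shows $x$ is not in this span, so this family alone never produces $2^{n+1}x$; additional $k$th powers are needed, and it is here that the condition $3 \mid k$ must enter.

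For the construction (the case $3 \nmid k$) I would augment the above family with the antisymmetric combinations $(1 + cx)^k - (1 - cx)^k = 2\sum_{i \text{ odd}} \binom{k}{i} c^i x^i$, whose coefficients are again controlled by Kummer's theorem on $v_2\binom{k}{i}$. Producing $2^{n+1}x$ then reduces to solving a finite system of congruences modulo $2^{n+2}$ for an integer combination $\sum_c a_c (1 + cx)^k$, that is, to prescribing the $2$-adic valuations of the power sums $P_i = \sum_c a_c c^i$ compatibly with the valuations of $\binom{k}{i}$; I would exhibit an explicit such combination when $3 \nmid k$. For the obstruction (the case $3 \mid k$) I would instead construct a $\setz/2^{n+2}\setz$-linear functional $\phi(f) = \sum_i \epsilon_i\,[x^i] f$ vanishing on all $k$th powers with $\epsilon_1$ odd, which by additivity forces $2^{n+1} x \notin J(k,R)$. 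The identity above already forces $\epsilon_0 \equiv 0$ and $\epsilon_j \equiv \epsilon_{2j} \pmod 2$ (so the parity of $\epsilon_i$ depends only on the odd part of $i$), and imposing $\phi(g^k) = 0$ on the remaining $k$th powers pins down the $\epsilon_i$ modulo small powers of $2$.

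The main obstacle is precisely this last step in both directions: showing that the finite congruence system is solvable exactly when $3 \nmid k$, equivalently that a consistent functional $\phi$ with $\epsilon_1$ odd exists exactly when $3 \mid k$. I expect the dichotomy to hinge on the single binomial-coefficient relation tied to $2^2 - 1 = 3$ --- the same relation Chinburg and Henriksen mishandled --- rather than on divisibility by larger numbers $2^j - 1$, and verifying that only the value $3$ (and not $7, 15, \dots$) survives is the delicate point the proof must get right.
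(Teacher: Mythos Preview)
Your proposal does not address the stated lemma at all. Lemma~\ref{lem:ak1} asserts that $\alpha_k(p)\ge 1$ whenever $p\mid k$ and $p<k$, for an arbitrary prime $p$; the paper does not prove this but simply quotes it from \cite[Prop.~11]{ch76}. What you have written is instead a sketch of Theorem~\ref{thm:main}: you open with ``the first assertion $\alpha_2(2)=0$'' (which is a clause of the theorem, not of the lemma), you invoke Lemma~\ref{lem:ak1} as an \emph{input} (``Lemma~\ref{lem:ak1} gives $\alpha_k(2)\ge 1$''), and the rest is devoted to deciding whether $\alpha_k(2)$ equals $1$ or $2$. A write-up that uses a statement as a hypothesis cannot be a proof of that statement.

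If one nonetheless reads your text as an attempted proof of Theorem~\ref{thm:main}, it is also not complete and diverges from the paper's argument. The paper handles the case $6\mid k$ by an explicit computation in $\setz[x]/(x^2+x+1)$ followed by a squaring induction on $v_2(k)$; you instead propose to construct an obstructing linear functional $\phi$ on $(\setz/2^{n+2}\setz)[x]$, but you do not carry this out and explicitly flag it as ``the main obstacle.'' For the case $3\nmid k$, you and the paper both start from the same congruence $(1+2h)^k\equiv 1+2^{n+1}(h+h^2)\pmod{2^{n+2}}$, but the paper then packages this into the auxiliary group $K(k,\setf_2[x])$ and proves $K(k,\setf_2[x])=\setf_2[x]$ via finite fields and Frobenius, whereas you aim to solve an explicit congruence system in the coefficients of $\sum_c a_c(1+cx)^k$ without actually exhibiting a solution. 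In both directions the decisive step is left undone.
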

\begin{lem}[Special case of {\cite[Prop.\ 3]{ch76}}] \label{lem:vpbinom}
For positive integers $k\geq j$ with $k$ even, $v_2(\binom{k}{j}) \geq v_2(k) - (j-1)$, with equality holding if and only if $j \in \set{1,2}$.
\end{lem}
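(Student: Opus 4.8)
The plan is to reduce the entire statement to the single multiplicative identity $\binom{k}{j} = \frac{k}{j}\binom{k-1}{j-1}$, which upon taking $2$-adic valuations yields
\[
v_2\!\left(\binom{k}{j}\right) = v_2(k) - v_2(j) + v_2\!\left(\binom{k-1}{j-1}\right).
\]
Since $\binom{k-1}{j-1}$ is an integer, the last term is nonnegative, so the desired inequality $v_2(\binom{k}{j}) \geq v_2(k) - (j-1)$ will follow at once from the elementary bound $v_2(j) \leq j-1$, valid for every positive integer $j$. I would prove this bound by noting $2^{v_2(j)} \mid j$, whence $2^{v_2(j)} \leq j \leq 2^{j-1}$ (the final inequality being a routine induction), so that $v_2(j) \leq j-1$. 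An alternative route through Legendre's formula $v_2(\binom{k}{j}) = s_2(j) + s_2(k-j) - s_2(k)$ is available, but the factorization identity is more direct for this estimate.

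For the equality analysis the key refinement is that this bound is \emph{strict} for $j \geq 3$. Indeed, for such $j$ one has $2^{j-1} > j$, so $2^{v_2(j)} \leq j < 2^{j-1}$ forces $v_2(j) \leq j-2$. Feeding $v_2(j) \leq j-2$ together with $v_2(\binom{k-1}{j-1}) \geq 0$ into the displayed identity gives $v_2(\binom{k}{j}) \geq v_2(k) - (j-2) > v_2(k) - (j-1)$, so the inequality is strict for all $j \geq 3$, and the only candidates for equality are $j \in \set{1,2}$.

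It then remains to confirm that equality genuinely holds in those two cases, which is immediate from the same identity. For $j = 1$ we have $\binom{k-1}{0} = 1$ and $v_2(1) = 0$, giving $v_2(\binom{k}{1}) = v_2(k)$; for $j = 2$ we have $\binom{k-1}{1} = k-1$, which is odd because $k$ is even, and $v_2(2) = 1$, giving $v_2(\binom{k}{2}) = v_2(k) - 1 = v_2(k) - (2-1)$. Combined with the strictness for $j \geq 3$, this establishes the ``if and only if.''

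I do not anticipate a serious obstacle: the argument is entirely driven by the factorization identity, and the hypothesis that $k$ is even enters at exactly one point, the $j = 2$ equality case, where it guarantees $v_2(k-1) = 0$. The only place requiring minor care is keeping the two directions of the equality claim cleanly separated — verifying equality for $j \in \set{1,2}$ by direct substitution, and strict inequality for $j \geq 3$ via the sharpened bound $v_2(j) \leq j-2$ — rather than attempting to track the equality condition inside the generic estimate all at once.
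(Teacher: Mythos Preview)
Your argument is correct. The identity $\binom{k}{j} = \tfrac{k}{j}\binom{k-1}{j-1}$ immediately gives $v_2(\binom{k}{j}) = v_2(k) - v_2(j) + v_2(\binom{k-1}{j-1})$, and combining the nonnegativity of the last term with $v_2(j) \leq j-1$ yields the inequality; your strictness analysis for $j \geq 3$ via $v_2(j) \leq j-2$ and the direct checks at $j=1,2$ are also fine, with the evenness of $k$ used exactly where needed.

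There is nothing in the paper to compare against: the lemma is stated as a special case of \cite[Prop.~3]{ch76} and quoted without proof, so your self-contained argument simply supplies what the paper defers to the reference.
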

\begin{lem}[{\cite[Lem.\ 8(c)]{ch76}}] \label{lem:pid}
For every prime $p$ and positive integer $k$, there is a nonzero ideal contained in $J(k, \setf_p[x])$. 
\end{lem}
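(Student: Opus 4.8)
The plan is to exhibit the desired ideal as a conductor. Write $A = \setf_p[x]$ and $J = J(k, A)$. The first observation is that, although $J$ is defined only as an additive subgroup, it is in fact an $\setf_p$-subalgebra of $A$: it contains $1 = 1^k$, and since the product of two $k$th powers is again a $k$th power, $f^k g^k = (fg)^k$, the additive span of the $k$th powers is closed under multiplication. I would then aim to show that the conductor
\[\mathfrak{c} = \set{a \in A : aA \subseteq J}\]
is a nonzero ideal of $A$; it is automatically an ideal contained in $J$ (as $a = a\cdot 1 \in aA \subseteq J$), so this suffices.

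Two ingredients make $\mathfrak{c}$ nonzero. First, $A$ is module-finite over $J$: since $x^k \in J$ we have $\setf_p[x^k] \subseteq J$, and $A$ is free of rank $k$ over $\setf_p[x^k]$ with basis $1, x, \dots, x^{k-1}$, hence finitely generated as a $J$-module. Second, I claim $J$ and $A$ share the fraction field $\setf_p(x)$. Granting this, a standard denominator-clearing argument finishes: writing $A = \sum_{i} J a_i$ for finitely many $a_i = b_i/d_i$ with $b_i, d_i \in J$, the nonzero element $d = \prod_i d_i \in J$ satisfies $d\,a_i \in J$ for each $i$, so $dA \subseteq J$ and $d \in \mathfrak{c}$.

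The heart of the matter is therefore the fraction-field equality, and this is where I expect the real work to lie. Assume first that $p \nmid k$. Then $J$ contains both $x^k$ and the difference $g = (x+1)^k - x^k$, whose leading term is $kx^{k-1}$; since $p \nmid k$ this has degree exactly $k - 1$. Setting $M = \operatorname{Frac}(J)$, we have $\setf_p(x^k) \subseteq M$ and $\setf_p(g) \subseteq M$, so the degree $d = [\setf_p(x) : M]$ divides both $[\setf_p(x):\setf_p(x^k)] = k$ and $[\setf_p(x):\setf_p(g)] = \deg g = k-1$. As $\gcd(k, k-1) = 1$, we conclude $d = 1$, whence $M = \setf_p(x)$, completing this case.

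Finally, the case $p \mid k$ reduces to the coprime one by Frobenius. Writing $k = p^n m$ with $p \nmid m$ and $n = v_p(k)$, every $k$th power satisfies $f^k = (f^m)^{p^n} = (f^m)(x^{p^n})$, so $J$ is the image of $J(m, A)$ under the substitution $x \mapsto x^{p^n}$, which identifies the problem with the coprime situation inside the subring $\setf_p[x^{p^n}]$ already treated. The main obstacle is the fraction-field computation; once the coprimality $\gcd(k,k-1)=1$ is exploited, the conductor formalism and the Frobenius reduction are routine.
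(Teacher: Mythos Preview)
The paper does not prove this lemma; it is quoted from \cite{ch76}, so there is no in-paper argument to compare against. Your argument for $p\nmid k$ is correct and pleasant: $J$ is a subring of $A=\setf_p[x]$, $A$ is module-finite over $\setf_p[x^k]\subseteq J$, and since $x^k$ and $(x+1)^k-x^k$ lie in $J$ with degrees $k$ and $k-1$, the fraction field of $J$ is all of $\setf_p(x)$ by the $\gcd$ trick, after which the conductor is a nonzero ideal of $A$ inside $J$. (The edge case $k=1$ should be set aside, since then $g=(x+1)-x=1$ is constant; but $J=A$ there anyway.)

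The reduction for $p\mid k$, however, has a genuine gap. With $k=p^{n}m$ and $p\nmid m$, the substitution $\phi\colon f(x)\mapsto f(x^{p^n})$ does identify $J(k,A)$ with the image of $J(m,A)$, and the coprime case gives a nonzero ideal $I$ of $A$ inside $J(m,A)$. But $\phi(I)$ is only an ideal of the \emph{subring} $\phi(A)=\setf_p[x^{p^n}]$, not of $A$, so you have not produced an ideal of $\setf_p[x]$ contained in $J(k,A)$. Worse, no such ideal can exist: when $p\mid k$ every element of $J(k,A)$ is a $p^{n}$th power (sums of $p^n$th powers being $p^n$th powers in characteristic $p$), so $J(k,A)\subseteq\setf_p[x^{p^n}]$; and if a nonzero principal ideal $(h)$ of $\setf_p[x]$ lay in $\setf_p[x^{p^n}]$ then both $\deg h$ and $\deg(xh)=\deg h+1$ would be divisible by $p^n$, a contradiction. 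Thus the lemma, taken literally for all $p$ and $k$, is false precisely in the case your reduction tries to handle; your proof establishes exactly the correct range $p\nmid k$, and the $p\mid k$ clause cannot be repaired.
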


\subsection*{Proof of \texorpdfstring{\cref{thm:main}}{Theorem \ref{thm:main}} for \texorpdfstring{$k$}{k} a multiple of \texorpdfstring{$6$}{6}} In light of \cref{lem:ak2} we need to show that $\alpha_k(2) \geq 2$. Since $m(k, R) \mid m(k)$ for any ring $R$ and $m(k', R) \mid m(k, R)$ for any factor $k'$ of $k$, it suffices to show the following claim:
\begin{prop}
For any nonnegative integer $s$, we have
\[J(6 \cdot 2^s, \setz[x]/(x^2+x+1)) \subseteq \setmid{a + 2^{s+3} bx}{a,b \in \setz}.\]
In particular, $2^{s+3} \mid m(6 \cdot 2^s, \setz[x]/(x^2+x+1))$.
\end{prop}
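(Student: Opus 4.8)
The plan is to realize $R = \setz[x]/(x^2+x+1)$ as the ring $\setz[\omega]$ of Eisenstein integers, writing $\omega$ for the image of $x$, a primitive cube root of unity; then every element has the form $c + d\omega$ with $c, d \in \setz$, and the nontrivial automorphism sends $\xi = c + d\omega$ to $\bar\xi = (c - d) - d\omega$. Because $x^2 + x + 1$ is irreducible modulo $2$, the prime $2$ is inert, so $v_2$ extends to a discrete valuation on $\setz[\omega]$ with residue field $\setf_4$. Since $J(k, R)$ is generated as an additive group by $k$th powers, and divisibility of the $\omega$-coefficient by $2^{s+3}$ is preserved under $\setz$-linear combinations, it suffices to show that for every $\xi \in R$, writing $\xi^k = A + B\omega$ with $k = 6\cdot 2^s$, we have $2^{s+3} \mid B$.

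To isolate $B$ I would use conjugation: a short computation gives $\xi^k - \bar\xi^k = B(1 + 2\omega)$, and since $(1+2\omega)^2 = -3$ the element $1 + 2\omega = \sqrt{-3}$ is a unit at $2$, so the claim reduces to $v_2(\xi^k - \bar\xi^k) \geq s + 3$. If $\xi \equiv 0 \pmod 2$ then $\xi^k$ is divisible by $2^k$ and $v_2(B) \geq k \geq s + 3$, so I may assume $\xi$ is a unit modulo $2$. As $\setf_4^\times$ has order $3$, this gives $\xi^3 \equiv 1 \pmod 2$, and writing $k = 3 \cdot 2^{s+1}$ and $u = \xi^3$ I would expand by repeated difference of squares:
\[\xi^k - \bar\xi^k = u^{2^{s+1}} - \bar u^{2^{s+1}} = (u - \bar u)\prod_{j=0}^{s}\bigl(u^{2^j} + \bar u^{2^j}\bigr).\]

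Next I would bound the valuations of the factors. Since $u \equiv 1 \pmod 2$, one checks inductively that $u^{2^j} \equiv 1 \pmod{2^{j+1}}$, whence $u^{2^j} + \bar u^{2^j} \equiv 2 \pmod{2^{j+1}}$ has valuation at least $1$ for each $j = 1, \dots, s$; these $s$ factors together contribute at least $s$. It remains to show that the two factors $u - \bar u = \xi^3 - \bar\xi^3$ and $u + \bar u = \xi^3 + \bar\xi^3$ have combined valuation at least $3$. Here I would compute them explicitly as $\xi^3 - \bar\xi^3 = 3\sqrt{-3}\,cd(c-d)$ and $\xi^3 + \bar\xi^3 = (2c - d)(c - 2d)(c + d)$, reducing the estimate to an elementary parity analysis in the three cases $(c, d) \equiv (1,0),\ (0,1),\ (1,1) \pmod 2$.

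I expect this last step to be the main obstacle, since it is exactly where the Chinburg--Henriksen bound of $s + 2$ must be improved to $s + 3$. The first two parity cases give the bound by direct inspection, but the all-odd case is delicate: there the even factors are $c - d$ (from the first product) and $c + d$ (from the second), and one must rule out that both have $2$-adic valuation exactly $1$. This follows because their sum $2c$ has valuation $1$, which forces one of $v_2(c-d), v_2(c+d)$ to equal $1$ and the other to be at least $2$, so their sum is at least $3$. Combining the three factor bounds yields $v_2(\xi^k - \bar\xi^k) \geq s + 3$, proving the containment; the final assertion $2^{s+3} \mid m(6\cdot 2^s, R)$ then follows since any $m \in G(k, R)$ satisfies $mx \in J(k, R)$, forcing $2^{s+3} \mid m$.
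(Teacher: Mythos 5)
Your proof is correct, but it takes a genuinely different route from the paper's. The paper argues by induction on $s$: the base case is the explicit expansion of $(a+bx)^6$, whose $x$-coefficient factors as $3(a^2-b^2)((a^2-2ab+b^2)^2-(a^2-3ab+b^2)^2)$ and is shown to be divisible by $8$ using the fact that a difference of two odd squares is divisible by $8$; the inductive step is the one-line observation that squaring $a+2^{s+3}bx$ yields something of the form $a'+2^{s+4}b'x$. You instead work valuation-theoretically in $\setz[\omega]$: conjugation isolates the $x$-coefficient as $(\xi^k-\bar\xi^k)/\sqrt{-3}$, the telescoping factorization $u^{2^{s+1}}-\bar u^{2^{s+1}}=(u-\bar u)\prod_{j=0}^{s}(u^{2^j}+\bar u^{2^j})$ turns the problem into an additive valuation count, and reduction modulo the inert prime $2$ (so $\xi^3\equiv 1$ for units, $\setf_4^\times$ having order $3$) lets you do the delicate analysis on cubes rather than sixth powers. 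The two arguments are structurally parallel: each factor $u^{2^j}+\bar u^{2^j}$ with $j\geq 1$ contributing one power of $2$ is exactly the paper's squaring step, and your claim $v_2(u-\bar u)+v_2(u+\bar u)\geq 3$ plays the role of its base case; indeed your ultrametric argument (for $c,d$ odd, one of $c\pm d$ has valuation $1$ and the other valuation at least $2$, since their sum $2c$ has valuation $1$) is equivalent to the paper's odd-squares fact, because $(c-d)(c+d)=c^2-d^2$. What your version buys is a conceptual explanation of the exponent $s+3$ (three powers of $2$ at the cube level plus one per squaring) and a framework that would generalize to other inert primes and quadratic rings; what the paper's buys is brevity and complete elementarity, needing nothing beyond polynomial expansion and parity. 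One presentational quibble: the cases $(c,d)\equiv(1,0),(0,1)$ are not really ``direct inspection''---they need the same sum-of-valuations trick as the all-odd case (e.g.\ for $c$ odd, $d$ even, the even factors $d$ and $2c-d$ sum to $2c$, which has valuation $1$)---but since you state that trick correctly where it matters, this is a matter of exposition, not a gap.
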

\begin{proof}
We use induction on $s$. To handle $s = 0$, we compute (in $\setz[x]/(x^2+x+1)$)
\[(a + bx)^6 = (a^6 - 15a^4b^2 + 20a^3b^3 - 6ab^5 + b^6) + (6a^5b - 15a^4b^2 + 15a^2b^4 - 6ab^5)x,\]
so we must show that $8$ divides
\[6a^5b - 15a^4b^2 + 15a^2b^4 - 6ab^5 = 3(a^2-b^2)((a^2-2ab+b^2)^2-(a^2-3ab+b^2)^2).\]
If $a$ and $b$ are both even, then this is clearly divisible by $8$. If $a$ and $b$ are both odd, then $a^2-b^2$ is a difference of two odd squares and is thus divisible by $8$. If $a$ and $b$ have different parities, then $(a^2-2ab+b^2)^2-(a^2-3ab+b^2)^2$ is a difference of two odd squares and is divisible by $8$.

For the inductive step, take some $s \geq 0$ and suppose that every $(6 \cdot 2^s)$-th power in $\setz[x]/(x^2+x+1)$ can be written as $a + 2^{s+3} bx$ for some $a,b\in\setz$. Then any $(6 \cdot 2^{s+1})$-th power can be written as
\[(a + 2^{s+3} bx)^2 = (a^2 - 2^{2s+6} b^2) + 2^{s+4}(ab - 2^{s+2}b^2)x,\]
which is of the form $a + 2^{s+4} bx$, as desired.
\end{proof}
\subsection*{Proof of \texorpdfstring{\cref{thm:main}}{Theorem \ref{thm:main}} for \texorpdfstring{$k$}{k} not a multiple of \texorpdfstring{$6$}{6}} If $k = 2$ we are done by \cref{lem:ak2}. Otherwise, by \cref{lem:ak1} we need to show that $\alpha_k(2) \leq 1$. By \cref{lem:ak2} it suffices to show that $m(k, \setz/2^{n+2}\setz[x]) \mid 2^{n+1}$, where $n = v_2(k)$.

For a ring $R$, let $K(k, R)$ be the additive subgroup generated by elements of the form $g^k$ and $g^k(h+h^2)$ for elements $g, h \in R$. We first reduce the problem to one over $\setf_2$.
\begin{prop}
$2^{n+1} K(k, \setf_2[x]) \subseteq J(k, \setz/2^{n+2}\setz[x])$. In particular, if $K(k, \setf_2[x]) = \setf_2[x]$, then $m(k, \setz/2^{n+2}\setz[x]) \mid 2^{n+1}$.
\end{prop}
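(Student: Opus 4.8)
The plan is to first make sense of the inclusion, which mixes the two rings: multiplication by $2^{n+1}$ depends only on a residue modulo $2$, so it descends to a group isomorphism $\setf_2[x] \xrightarrow{\sim} 2^{n+1}\cdot(\setz/2^{n+2}\setz[x])$ sending $f$ to $2^{n+1}\tilde f$ for any lift $\tilde f$, and this is how ``$2^{n+1}K(k,\setf_2[x])$'' is to be read. Since $J(k, \setz/2^{n+2}\setz[x])$ is an additive group, it then suffices to verify the inclusion on the two families of generators of $K(k, \setf_2[x])$, namely $g^k$ and $g^k(h + h^2)$. The first family is immediate: if $\tilde g$ lifts $g$, then $2^{n+1} g^k = 2^{n+1}\tilde g^k$ is an integer multiple of the $k$th power $\tilde g^k$ and hence already lies in $J(k, \setz/2^{n+2}\setz[x])$.

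The crux is the second family. I would fix lifts $\tilde g, \tilde h$ and expand $(\tilde g(1 + 2\tilde h))^k = \sum_{j=0}^k \binom{k}{j} 2^j \tilde g^k \tilde h^j$ by the binomial theorem, so that
\[(\tilde g(1 + 2\tilde h))^k - \tilde g^k = \sum_{j=1}^k \binom{k}{j} 2^j\, \tilde g^k \tilde h^j\]
is a difference of two $k$th powers, hence a member of $J(k, \setz/2^{n+2}\setz[x])$. I would then read off the $2$-adic valuation of each coefficient using \cref{lem:vpbinom}. For $j = 1, 2$ the lemma gives equality, so $v_2\bigl(\binom{k}{1} 2\bigr) = n + 1$ and $v_2\bigl(\binom{k}{2} 2^2\bigr) = (n-1) + 2 = n + 1$; reducing modulo $2^{n+2}$, both coefficients become exactly $2^{n+1}$. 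For $j \geq 3$ the lemma's strict inequality yields $v_2\bigl(\binom{k}{j} 2^j\bigr) \geq (n - j + 2) + j = n + 2$, so all higher terms vanish modulo $2^{n+2}$. Thus $(\tilde g(1 + 2\tilde h))^k - \tilde g^k \equiv 2^{n+1}\tilde g^k(\tilde h + \tilde h^2) \pmod{2^{n+2}}$, and as this equals $2^{n+1}$ times a lift of $g^k(h + h^2)$, the remaining generator is accounted for.

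With the inclusion $2^{n+1} K(k, \setf_2[x]) \subseteq J(k, \setz/2^{n+2}\setz[x])$ in hand, the ``in particular'' follows formally: if $K(k, \setf_2[x]) = \setf_2[x]$, then for every $r \in \setz/2^{n+2}\setz[x]$ its reduction $\bar r$ lies in $K(k, \setf_2[x])$, whence $2^{n+1} r = 2^{n+1}\bar r \in J(k, \setz/2^{n+2}\setz[x])$, so $2^{n+1} \in G(k, \setz/2^{n+2}\setz[x])$ and therefore $m(k, \setz/2^{n+2}\setz[x]) \mid 2^{n+1}$. The main obstacle is the coefficient bookkeeping in the binomial expansion, and it is resolved entirely by the sharp form of \cref{lem:vpbinom}: the equality at $j = 1, 2$ is what forces both surviving coefficients to equal $2^{n+1}$, while the strict inequality for $j \geq 3$ is exactly what raises those valuations to $\geq n + 2$ and annihilates them. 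Choosing the dilation factor $2$ in $1 + 2\tilde h$ (rather than $1 + \tilde h$) is precisely what aligns the $j = 1$ and $j = 2$ valuations, so that this single pair of $k$th powers suffices.
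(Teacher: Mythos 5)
Your proof is correct and follows the same route as the paper's: reduce to the two generator families, handle $2^{n+1}g^k$ trivially, and obtain $2^{n+1}g^k(h+h^2)$ from the difference of $k$th powers $g^k(1+2h)^k - g^k$ modulo $2^{n+2}$ via \cref{lem:vpbinom}. Your write-up just makes explicit the binomial-coefficient valuation bookkeeping and the lifting convention that the paper leaves implicit.
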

\begin{proof}
After lifting elements of $\setf_2[x]$ to $\setz/2^{n+2}\setz[x]$, it suffices to show that for all $g, h \in \setz/2^{n+2}\setz[x]$, the elements $2^{n+1} g^k$ and $2^{n+1}g^k(h+h^2)$ are in $J(k, \setz/2^{n+2}\setz[x])$. This is obvious in the former case. In the latter case, note that by \cref{lem:vpbinom}, we have
\[g^k(1 + 2h)^k - g^k \equiv 2^{n+1}g^k(h+h^2) \pmod{2^{n+2}}.\qedhere\]
\end{proof}

We now need to show that $K(k, \setf_2[x]) = \setf_2[x]$.
\begin{prop} \label{prop:ff}
For any positive integer $j$, we have $K(k, \setf_{2^j}) = \setf_{2^j}$.
\end{prop}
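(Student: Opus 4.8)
The plan is to prove $K(k,\setf_{2^j}) = \setf_{2^j}$ by a duality argument. Viewing $\setf_{2^j}$ as an $\setf_2$-vector space, the trace form $\langle a, b\rangle = \operatorname{Tr}(ab)$, where $\operatorname{Tr}\colon \setf_{2^j}\to\setf_2$ is the field trace, is nondegenerate, so every $\setf_2$-linear functional is $x\mapsto\operatorname{Tr}(cx)$ for a unique $c\in\setf_{2^j}$. Hence $K(k,\setf_{2^j})$ is the whole field if and only if the only $c$ for which $\operatorname{Tr}(cx)$ vanishes on every generator is $c = 0$. Translating the two families of generators, I would reduce to showing that the conditions $\operatorname{Tr}(cg^k) = 0$ for all $g$ and $\operatorname{Tr}(cg^k(h+h^2)) = 0$ for all $g, h$ together force $c = 0$.

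The second family is where the Artin--Schreier structure enters. Since $h\mapsto h + h^2$ is $\setf_2$-linear with kernel $\setf_2$ and image contained in (hence, by a dimension count, equal to) $\ker\operatorname{Tr}$, the elements $h + h^2$ sweep out exactly the trace-zero hyperplane $T$. The condition then says that $cg^k$ lies in $T^{\perp}$ for every $g$. Using $\operatorname{Tr}(y^2) = \operatorname{Tr}(y)$ one computes $T^{\perp} = \setf_2$, so $cg^k\in\setf_2$ for all $g\in\setf_{2^j}$. For $c\neq 0$ this forces $g^k$ to take the single value $c^{-1}$ on $\setf_{2^j}^{\times}$, which happens only when $(2^j-1)\mid k$; in that case $g^k = 1$ for every $g\neq 0$, whence $c = 1$. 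Feeding this back into the first family, $\operatorname{Tr}(c\cdot 1) = \operatorname{Tr}(1) = 0$ requires $j$ to be even.

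The crux is therefore to exclude the configuration ``$(2^j-1)\mid k$ and $j$ even,'' and this is exactly where the standing hypotheses $k$ even and $6\nmid k$ are used: for even $j$ we have $3 = 2^2 - 1 \mid 2^j - 1$, so $(2^j-1)\mid k$ would give $3\mid k$ and, together with $2\mid k$, the forbidden divisibility $6\mid k$. I expect this degenerate case to be the only real obstacle, and it is no accident that it is delicate: it is precisely the boundary between $\alpha_k(2) = 1$ and $\alpha_k(2) = 2$, so the reappearance of the condition $6\mid k$ here mirrors the dichotomy in \cref{thm:main}. The remaining inputs (nondegeneracy of the trace form, the Artin--Schreier count, and $T^{\perp} = \setf_2$) are standard finite-field facts that I would dispatch quickly, and the case $j = 1$ is immediate since $1 = 1^k \in K(k,\setf_2)$.
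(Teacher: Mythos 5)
Your proof is correct, but it takes a genuinely different route from the paper's. The paper argues by direct construction, splitting on the parity of $j$: for $j$ odd, $\operatorname{Tr}(1)=1$, so every element of $\setf_{2^j}$ is either of the form $h+h^2$ or $(h+h^2)+1$, both visibly in $K(k,\setf_{2^j})$; for $j$ even, since $3\nmid k$ the cube roots of unity $1,\omega,\omega^2\in\setf_{2^j}$ are all $k$th powers, and since $r+\omega r+\omega^2 r=0$ these three elements cannot all have trace $1$, so $r=\omega^i(h+h^2)=g^k(h+h^2)$ for suitable $i$, $g$, $h$. You instead compute the annihilator of $K(k,\setf_{2^j})$ under the trace pairing, and every step checks out: $K$ is an $\setf_2$-subspace because the field has characteristic $2$; the Artin--Schreier image $\setmid{h+h^2}{h\in\setf_{2^j}}$ is exactly the trace-zero hyperplane $T$; $T^\perp=\setf_2$; a nonzero annihilator $c$ forces $g^k=c^{-1}$ for all $g\neq 0$, hence $(2^j-1)\mid k$ and $c=1$; and then $\operatorname{Tr}(1)=0$ forces $j$ even, which is incompatible with $3\nmid k$ since $3\mid 2^j-1$ for even $j$. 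Both proofs thus consume the same two inputs---the Artin--Schreier description of $h+h^2$ and the hypothesis $3\nmid k$---but package them differently. The paper's construction is shorter and more elementary, exhibiting each element explicitly as a sum of at most two generators; your duality argument needs a bit more linear algebra but isolates the exact obstruction, showing that $K(k,\setf_{2^j})$ can fail to be everything only when $(2^j-1)\mid k$ with $j$ even, which makes transparent why divisibility by $3$ (equivalently, by $6$ for even $k$) is precisely the boundary appearing in \cref{thm:main}.
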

\begin{proof}
Recall that an element of $\setf_{2^j}$ can be written in the form $h + h^2$ if and only if its trace (over $\setf_2$) is $0$. If $j$ is odd, then the trace of $1$ is $1$, so any element in $\setf_{2^j}$ is either of the form $h + h^2$ or $h + h^2 + 1$, each of which is clearly in $K(k, \setf_{2^j})$.

If $j$ is even, then since $k$ is not divisible by $3$, the three cube roots of $1$ (call them $1$, $\omega$, and $\omega^2$) in $\setf_{2^j}$ are all $k$th powers. For any element $r \in \setf_{2^j}$, the elements $r$, $\omega r$, and $\omega^2 r$ sum to $0$, so they cannot all have trace $1$. It follows that we can write $r = \omega^i(h + h^2)$ for some $i \in \set{0,1,2}$ and $h \in \setf_{2^j}$.
\end{proof}
The following argument concludes the proof:
\begin{prop}
$K(k, \setf_2[x]) = \setf_2[x]$.
\end{prop}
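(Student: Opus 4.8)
The plan is to bootstrap from the field computation in \cref{prop:ff} to the polynomial ring by passing through a finite quotient. First I would note that $J(k, \setf_2[x]) \subseteq K(k, \setf_2[x])$, since every $k$th power is itself a generator of $K$. By \cref{lem:pid} there is a nonzero ideal $I \subseteq J(k, \setf_2[x]) \subseteq K(k, \setf_2[x])$, and as $\setf_2[x]$ is a PID we may write $I = (f)$ with $f \neq 0$. For the quotient map $\pi\colon \setf_2[x] \to \setf_2[x]/(f)$, surjectivity on generators gives $\pi(K(k, \setf_2[x])) = K(k, \setf_2[x]/(f))$. Hence if $K(k, \setf_2[x]/(f)) = \setf_2[x]/(f)$, then every $r \in \setf_2[x]$ agrees modulo $(f)$ with some element $r'$ of $K(k, \setf_2[x])$, and since $r - r' \in (f) \subseteq K(k, \setf_2[x])$ this forces $r \in K(k, \setf_2[x])$. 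So it suffices to prove $K(k, R) = R$ for the finite ring $R = \setf_2[x]/(f)$.

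Next I would reduce to local rings. A direct check on generators shows $K(k, R_1 \times R_2) = K(k, R_1) \times K(k, R_2)$: every generator of the left-hand side has coordinates that are generators on the right, and conversely $(g^k, 0)$ and $(g^k(h+h^2), 0)$ are obtained by setting the other coordinate to $0$. Factoring $f = \prod_i p_i^{e_i}$ into irreducibles and applying the Chinese remainder theorem, $R \cong \prod_i \setf_2[x]/(p_i^{e_i})$, so it is enough to show $K(k, \setf_2[x]/(p^e)) = \setf_2[x]/(p^e)$ for each prime-power factor.

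For a fixed local ring $S = \setf_2[x]/(p^e)$ with maximal ideal $\mathfrak m = (p)$, I would induct on $e$. The base case $e = 1$ is exactly \cref{prop:ff}, since $S$ is then the residue field $\setf_{2^{\deg p}}$. For the inductive step with $e \geq 2$, the quotient $S \to \setf_2[x]/(p^{e-1})$ is surjective with kernel $\mathfrak m^{e-1}$, so by the inductive hypothesis $K(k, S)$ surjects onto $\setf_2[x]/(p^{e-1})$; equivalently $K(k, S) + \mathfrak m^{e-1} = S$. It then remains to show $\mathfrak m^{e-1} \subseteq K(k, S)$, and here the characteristic-$2$ structure does the work: any $\nu \in \mathfrak m^{e-1}$ satisfies $\nu^2 \in \mathfrak m^{2e-2} = 0$, so $\nu = \nu + \nu^2$ is of the form $h + h^2$ with $h = \nu$ and is therefore a generator of $K(k, S)$. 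Combining $\mathfrak m^{e-1} \subseteq K(k,S)$ with $K(k, S) + \mathfrak m^{e-1} = S$ yields $K(k, S) = S$, completing the induction.

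The conceptual heart of the argument — and the one place where the construction of $K$, rather than $J$, is essential — is this last step: the extra generators $g^k(h + h^2)$ are precisely what let square-zero nilpotents be captured, via the identity $\nu = \nu + \nu^2$ valid in characteristic $2$. I therefore expect the main obstacle to be not a single hard computation but the correct organization of the reductions, in particular verifying that the quotient and product constructions interact with $K$ as claimed, so that the residue-field input \cref{prop:ff} and the nilpotent identity can be combined.
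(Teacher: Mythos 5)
Your proof is correct, and it shares the paper's outer skeleton -- reduce via the ideal from \cref{lem:pid} to a finite quotient, use the product formula $K(k, R_1 \times R_2) = K(k, R_1) \times K(k, R_2)$, and feed in \cref{prop:ff} at the field level -- but you handle the non-reduced (nilpotent) case by a genuinely different mechanism. The paper never decomposes the general quotient into local pieces: it proves that $f \in K(k, \setf_2[x]/I)$ implies $f \in K(k, \setf_2[x]/I^2)$ by applying the Frobenius endomorphism to an entire membership certificate (squaring every term in the expression for $f$ gives $f^2 \in K(k, \setf_2[x]/I^2)$, and then $f = (f + f^2) + f^2$ with $f + f^2 = 1^k(f+f^2)$ a generator), and iterates this squaring from a radical ideal, using that every nonzero ideal contains a power of its radical. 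You instead apply the Chinese remainder theorem to reduce to the local rings $\setf_2[x]/(p^e)$ and induct on $e$, capturing the kernel $\mathfrak{m}^{e-1}$ of $\setf_2[x]/(p^e) \to \setf_2[x]/(p^{e-1})$ directly via the identity $\nu = \nu + \nu^2$ for square-zero $\nu$ (valid since $2e-2 \geq e$ when $e \geq 2$). Both arguments turn on the same characteristic-$2$ feature of the generators $g^k(h+h^2)$, and your version makes the role of nilpotents maximally transparent: the kernel step is a one-line verification. What the paper's Frobenius trick buys in exchange is uniformity -- it treats an arbitrary ideal and its square at once, so no localization or exponent-by-exponent induction is needed, and the passage from radical to general ideals takes only doubling steps. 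The two proofs are of comparable length and rigor; yours is a valid, slightly more hands-on alternative.
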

\begin{proof}
If we let $I$ be the ideal from \cref{lem:pid}, then $I \subseteq J(k, \setf_2[x]) \subseteq K(k, \setf_2[x])$, so we will be done if we can show that $K(k, \setf_2[x]/I) = \setf_2[x]/I$. This follows rather quickly from \cref{prop:ff} if $I$ is a radical ideal; in that case, since $\setf_2[x]$ is a principal ideal domain we may factor $I = (p_1)(p_2) \cdots (p_\ell)$ for distinct irreducible polynomials $p_1,p_2,\ldots,p_\ell \in \setf_2[x]$, implying that $\setf_2[x]/I$ is a product of finite fields, after which we are done by \cref{prop:ff} since it easy to show that $K(k, R_1 \times R_2) = K(k, R_1) \times K(k, R_2)$ for rings $R_1$ and $R_2$.

To tackle the general case, we claim that for any $f \in \setf_2[x]$ and ideal $I \subseteq \setf_2[x]$, we have that $f \in K(k, \setf_2[x]/I)$ implies $f \in K(k, \setf_2[x]/I^2)$. Indeed, if we have
\[f = \sum_{i=1}^s a_i g_i^k + \sum_{i=1}^t b_i (g'_i)^k (h_i + h_i^2) + r\]
for some integers $a_i, b_i \in \setz$ and elements $g_i, g'_i, h_i, r \in \setf_2[x]$ with $r \in I$, then by applying the Frobenius endomorphism we find
\[f^2 = \sum_{i=1}^s a_i (g_i^2)^k + \sum_{i=1}^t b_i ((g'_i)^2)^k (h_i^2 + (h_i^2)^2) + r^2,\]
so $f^2 \in K(k, \setf_2[x]/I^2)$. But $f + f^2 \in K(k, \setf_2[x]/I^2)$, so $f \in K(k, \setf_2[x]/I^2)$ as well.

The fact that $K(k, \setf_2[x]/I) = \setf_2[x]/I$ for all nonzero ideals $I$ follows from iterating the above process and the fact that every nonzero ideal $I$ divides a power of some nonzero radical ideal.
\end{proof}

\section*{Appendix} \label{sec:appendix}
We exhibit a corrected version of the table in \cite{ch76}, with bolded rows signifying corrections due to \cref{thm:main}. Two typos (at $k = 100, 128$) in the original have also been corrected. Here we let
\[a(k) = \prod_{p\mid k} p^{\alpha_k(p)} \quad\text{and}\quad b(k) = \prod_{\substack{p < k \\ p \mkern1.5mu\nmid\mkern1.5mu k}} p^{\beta_k(p)}.\]
The sequences $a(k)$, $b(k)$, $m(k)/k$, and $m(k)$ appear in \cite{oeis} as sequences \href{https://oeis.org/A005730}{A005730}, \href{https://oeis.org/A005731}{A005731}, \href{https://oeis.org/A005729}{A005729}, and \href{https://oeis.org/A370252}{A370252}, respectively.
\begin{center}
{\tiny
\begin{tabular}{ccccc} \toprule
$k$&$a(k)$&$b(k)$&$m(k)/k$&$m(k)$\\ \midrule
$1$&$1$&$1$&$1$&$1$\\
$2$&$1$&$1$&$1$&$2$\\
$3$&$1$&$2$&$2$&$6$\\
$4$&$2$&$3$&$6$&$24$\\
$5$&$1$&$2$&$2$&$10$\\
$6$&$4\cdot3=12$&$5$&$60$&$360$\\
$7$&$1$&$2$&$2$&$14$\\
$8$&$2$&$3\cdot7=21$&$42$&$336$\\
$9$&$3$&$2$&$6$&$54$\\
$10$&$2\cdot5=10$&$3$&$30$&$300$\\
$11$&$1$&$1$&$1$&$11$\\
$12$&$4\cdot3=12$&$5\cdot11=55$&$660$&$7920$\\
$13$&$1$&$3$&$3$&$39$\\
\boldmath$14$&\boldmath$2\cdot7=14$&\boldmath$13$&\boldmath$182$&\boldmath$2548$\\
$15$&$3\cdot5=15$&$2$&$30$&$450$\\
$16$&$2$&$3\cdot7=21$&$42$&$672$\\
$17$&$1$&$2$&$2$&$34$\\
$18$&$4\cdot3=12$&$5\cdot17=85$&$1020$&$18360$\\
$19$&$1$&$1$&$1$&$19$\\
$20$&$2\cdot5=10$&$3\cdot19=57$&$570$&$11400$\\
$21$&$3\cdot7=21$&$2$&$42$&$882$\\
$22$&$2\cdot11=22$&$1$&$22$&$484$\\
$23$&$1$&$1$&$1$&$23$\\
$24$&$4\cdot3=12$&$5\cdot7\cdot11\cdot23=8855$&$106260$&$2550240$\\
$25$&$5$&$2$&$10$&$250$\\
$26$&$2\cdot13=26$&$3\cdot5=15$&$390$&$10140$\\
$27$&$3$&$2$&$6$&$162$\\
\boldmath$28$&\boldmath$2\cdot7=14$&\boldmath$3\cdot13=39$&\boldmath$546$&\boldmath$15288$\\
$29$&$1$&$1$&$1$&$29$\\
$30$&$4\cdot3\cdot5=60$&$29$&$1740$&$52200$\\
$31$&$1$&$2\cdot5=10$&$10$&$310$\\
$32$&$2$&$3\cdot7\cdot31=651$&$1302$&$41664$\\
$33$&$3\cdot11=33$&$2$&$66$&$2178$\\
$34$&$2\cdot17=34$&$1$&$34$&$1156$\\
$35$&$5\cdot7=35$&$2$&$70$&$2450$\\
$36$&$4\cdot3=12$&$5\cdot11\cdot17=935$&$11220$&$403920$\\
$37$&$1$&$1$&$1$&$37$\\
$38$&$2\cdot19=38$&$37$&$1406$&$53428$\\
$39$&$3\cdot13=39$&$2$&$78$&$3042$\\
$40$&$2\cdot5=10$&$3\cdot7\cdot19=399$&$3990$&$159600$\\
$41$&$1$&$1$&$1$&$41$\\
$42$&$4\cdot3\cdot7=84$&$5\cdot13\cdot41=2665$&$223860$&$9402120$\\
$43$&$1$&$1$&$1$&$43$\\
$44$&$2\cdot11=22$&$3\cdot43=129$&$2838$&$124872$\\
$45$&$3\cdot5=15$&$2$&$30$&$1350$\\
$46$&$2\cdot23=46$&$1$&$46$&$2116$\\
$47$&$1$&$1$&$1$&$47$\\
$48$&$4\cdot3=12$&$5\cdot7\cdot11\cdot23\cdot47=416185$&$4994220$&$239722560$\\
$49$&$7$&$2$&$14$&$686$\\
$50$&$2\cdot5=10$&$3\cdot7=21$&$210$&$10500$\\
$51$&$3\cdot17=51$&$2$&$102$&$5202$\\
$52$&$2\cdot13=26$&$3\cdot5=15$&$390$&$20280$\\
$53$&$1$&$1$&$1$&$53$\\
$54$&$4\cdot3=12$&$5\cdot17\cdot53=4505$&$54060$&$2919240$\\
$55$&$5\cdot11=55$&$2$&$110$&$6050$\\
\boldmath$56$&\boldmath$2\cdot7=14$&\boldmath$3\cdot13=39$&\boldmath$546$&\boldmath$30576$\\
$57$&$3\cdot19=57$&$2\cdot7=14$&$798$&$45486$\\
$58$&$2\cdot29=58$&$1$&$58$&$3364$\\
$59$&$1$&$1$&$1$&$59$\\
$60$&$4\cdot3\cdot5=60$&$11\cdot19\cdot29\cdot59=357599$&$21455940$&$1287356400$\\
\bottomrule
\end{tabular} \\
\begin{tabular}{ccccc} \toprule
$k$&$a(k)$&$b(k)$&$m(k)/k$&$m(k)$\\ \midrule
$61$&$1$&$1$&$1$&$61$\\
\boldmath$62$&\boldmath$2\cdot31=62$&\boldmath$5\cdot61=305$&\boldmath$18910$&\boldmath$1172420$\\
$63$&$3\cdot7=21$&$2$&$42$&$2646$\\
$64$&$2$&$3\cdot7\cdot31=651$&$1302$&$83328$\\
$65$&$5\cdot13=65$&$2\cdot3=6$&$390$&$25350$\\
$66$&$4\cdot3\cdot11=132$&$5$&$660$&$43560$\\
$67$&$1$&$1$&$1$&$67$\\
$68$&$2\cdot17=34$&$3\cdot67=201$&$6834$&$464712$\\
$69$&$3\cdot23=69$&$2$&$138$&$9522$\\
\boldmath$70$&\boldmath$2\cdot5\cdot7=70$&\boldmath$3\cdot13=39$&\boldmath$2730$&\boldmath$191100$\\
$71$&$1$&$1$&$1$&$71$\\
$72$&$4\cdot3=12$&$5\cdot7\cdot11\cdot17\cdot23\cdot71=10687985$&$128255820$&$9234419040$\\
$73$&$1$&$2$&$2$&$146$\\
$74$&$2\cdot37=74$&$73$&$5402$&$399748$\\
$75$&$3\cdot5=15$&$2$&$30$&$2250$\\
$76$&$2\cdot19=38$&$3\cdot37=111$&$4218$&$320568$\\
$77$&$7\cdot11=77$&$2$&$154$&$11858$\\
$78$&$4\cdot3\cdot13=156$&$5$&$780$&$60840$\\
$79$&$1$&$1$&$1$&$79$\\
$80$&$2\cdot5=10$&$3\cdot7\cdot19\cdot79=31521$&$315210$&$25216800$\\
$81$&$3$&$2$&$6$&$486$\\
$82$&$2\cdot41=82$&$3$&$246$&$20172$\\
$83$&$1$&$1$&$1$&$83$\\
$84$&$4\cdot3\cdot7=84$&$5\cdot11\cdot13\cdot41\cdot83=2433145$&$204384180$&$17168271120$\\
$85$&$5\cdot17=85$&$2$&$170$&$14450$\\
$86$&$2\cdot43=86$&$1$&$86$&$7396$\\
$87$&$3\cdot29=87$&$2$&$174$&$15138$\\
$88$&$2\cdot11=22$&$3\cdot7\cdot43=903$&$19866$&$1748208$\\
$89$&$1$&$1$&$1$&$89$\\
$90$&$4\cdot3\cdot5=60$&$17\cdot29\cdot89=43877$&$2632620$&$236935800$\\
$91$&$7\cdot13=91$&$2\cdot3=6$&$546$&$49686$\\
$92$&$2\cdot23=46$&$3$&$138$&$12696$\\
$93$&$3\cdot31=93$&$2\cdot5=10$&$930$&$86490$\\
$94$&$2\cdot47=94$&$1$&$94$&$8836$\\
$95$&$5\cdot19=95$&$2$&$190$&$18050$\\
$96$&$4\cdot3=12$&$5\cdot7\cdot11\cdot23\cdot31\cdot47=12901735$&$154820820$&$14862798720$\\
$97$&$1$&$1$&$1$&$97$\\
\boldmath$98$&\boldmath$2\cdot7=14$&\boldmath$13\cdot97=1261$&\boldmath$17654$&\boldmath$1730092$\\
$99$&$3\cdot11=33$&$2$&$66$&$6534$\\
$100$&$2\cdot5=10$&$3\cdot7\cdot19=399$&$3990$&$399000$\\
$101$&$1$&$1$&$1$&$101$\\
$102$&$4\cdot3\cdot17=204$&$5\cdot101=505$&$103020$&$10508040$\\
$103$&$1$&$1$&$1$&$103$\\
$104$&$2\cdot13=26$&$3\cdot5\cdot7\cdot103=10815$&$281190$&$29243760$\\
$105$&$3\cdot5\cdot7=105$&$2$&$210$&$22050$\\
$106$&$2\cdot53=106$&$1$&$106$&$11236$\\
$107$&$1$&$1$&$1$&$107$\\
$108$&$4\cdot3=12$&$5\cdot11\cdot17\cdot53\cdot107=5302385$&$63628620$&$6871890960$\\
$109$&$1$&$1$&$1$&$109$\\
$110$&$2\cdot5\cdot11=110$&$3\cdot109=327$&$35970$&$3956700$\\
$111$&$3\cdot37=111$&$2$&$222$&$24642$\\
\boldmath$112$&\boldmath$2\cdot7=14$&\boldmath$3\cdot13=39$&\boldmath$546$&\boldmath$61152$\\
$113$&$1$&$1$&$1$&$113$\\
$114$&$4\cdot3\cdot19=228$&$5\cdot7\cdot37\cdot113=146335$&$33364380$&$3803539320$\\
$115$&$5\cdot23=115$&$2$&$230$&$26450$\\
$116$&$2\cdot29=58$&$3$&$174$&$20184$\\
$117$&$3\cdot13=39$&$2$&$78$&$9126$\\
$118$&$2\cdot59=118$&$1$&$118$&$13924$\\
$119$&$7\cdot17=119$&$2$&$238$&$28322$\\
$120$&$4\cdot3\cdot5=60$&$7\cdot11\cdot19\cdot23\cdot29\cdot59=57573439$&$3454406340$&$414528760800$\\
$121$&$11$&$3$&$33$&$3993$\\
$122$&$2\cdot61=122$&$11$&$1342$&$163724$\\
$123$&$3\cdot41=123$&$2$&$246$&$30258$\\
\boldmath$124$&\boldmath$2\cdot31=62$&\boldmath$3\cdot5\cdot61=915$&\boldmath$56730$&\boldmath$7034520$\\
$125$&$5$&$2$&$10$&$1250$\\
$126$&$4\cdot3\cdot7=84$&$5\cdot13\cdot17\cdot41=45305$&$3805620$&$479508120$\\
$127$&$1$&$2$&$2$&$254$\\
$128$&$2$&$3\cdot7\cdot31\cdot127=82677$&$165354$&$21165312$\\
$129$&$3\cdot43=129$&$2$&$258$&$33282$\\
$130$&$2\cdot5\cdot13=130$&$3$&$390$&$50700$\\
$131$&$1$&$1$&$1$&$131$\\
$132$&$4\cdot3\cdot11=132$&$5\cdot43\cdot131=28165$&$3717780$&$490746960$\\
$133$&$7\cdot19=133$&$2\cdot11=22$&$2926$&$389158$\\
$134$&$2\cdot67=134$&$1$&$134$&$17956$\\
$135$&$3\cdot5=15$&$2$&$30$&$4050$\\
\bottomrule
\end{tabular} \\
\begin{tabular}{ccccc} \toprule
$k$&$a(k)$&$b(k)$&$m(k)/k$&$m(k)$\\ \midrule
$136$&$2\cdot17=34$&$3\cdot7\cdot67=1407$&$47838$&$6505968$\\
$137$&$1$&$1$&$1$&$137$\\
$138$&$4\cdot3\cdot23=276$&$5\cdot137=685$&$189060$&$26090280$\\
$139$&$1$&$1$&$1$&$139$\\
\boldmath$140$&\boldmath$2\cdot5\cdot7=70$&\boldmath$3\cdot13\cdot19\cdot139=102999$&\boldmath$7209930$&\boldmath$1009390200$\\
$141$&$3\cdot47=141$&$2$&$282$&$39762$\\
$142$&$2\cdot71=142$&$1$&$142$&$20164$\\
$143$&$11\cdot13=143$&$3$&$429$&$61347$\\
$144$&$4\cdot3=12$&$5\cdot7\cdot11\cdot17\cdot23\cdot47\cdot71=502335295$&$6028023540$&$868035389760$\\
$145$&$5\cdot29=145$&$2$&$290$&$42050$\\
$146$&$2\cdot73=146$&$1$&$146$&$21316$\\
$147$&$3\cdot7=21$&$2$&$42$&$6174$\\
$148$&$2\cdot37=74$&$3\cdot73=219$&$16206$&$2398488$\\
$149$&$1$&$1$&$1$&$149$\\
$150$&$4\cdot3\cdot5=60$&$7\cdot29\cdot149=30247$&$1814820$&$272223000$\\
\bottomrule
\end{tabular} 
}\end{center}
\section*{Acknowledgments}
This work was supported in part by a Princeton First-Year Fellowship and the NSF Graduate Research Fellowships Program (grant number: DGE-2039656). Initially inspired to think about $m(k)$ as a generalization of the fifth problem on the ELMO 2023 competition,\footnote{See \url{https://web.evanchen.cc/exams/ELMO-2023.pdf}.} the author would like to thank Karthik Vedula for proposing the problem, Daniel Carter for drawing his attention to the reference \cite{ch76}, and Ted Chinburg, Chayim Lowen, Gheehyun Nahm, and an anonymous referee for helpful conversations and/or comments.
\printbibliography
\end{document}